\documentclass[12pt,a4paper]{amsart}
\usepackage{mathabx}  

\usepackage[utf8]{inputenc}	
\usepackage[T1]{fontenc}
\usepackage[in,headings]{fullpage}
\usepackage{amsbsy, amscd, amsfonts, amsmath, amsrefs, amssymb, amsthm}

\usepackage{graphicx}

\usepackage{float}
\usepackage{color}   
\usepackage[colorlinks=true,linkcolor=blue,citecolor=blue,urlcolor=blue]{hyperref}
\usepackage{comment}
\excludecomment{A}

\newtheorem{theorem}{Theorem}

\theoremstyle{definition}

\newtheorem{remark}[theorem]{Remark}

\theoremstyle{remark}

\renewcommand{\Re}{\mathop{\mathrm{Re}}\nolimits}
\renewcommand{\Im}{\mathop{\mathrm{Im}}\nolimits}

\newcommand{\Li}{\mathop{\rm Li }\nolimits}
\newfont{\cmbsy}{cmbsy10}
\newfont{\cmmib}{cmmib10}

\begin{document}

\title{Riemann and the logarithmic derivatives of zeta}
\author[Arias de Reyna]{J. Arias de Reyna}
\address{%
Universidad de Sevilla \\ 
Facultad de Matem\'aticas \\ 
c/Tarfia, sn \\ 
41012-Sevilla \\ 
Spain.} 

\subjclass[2020]{Primary 11M06; Secondary 30D99}

\keywords{función zeta, Dirichlet polynomials}


\email{arias@us.es, ariasdereyna1947@gmail.com}


\begin{abstract}
In one of his posthumous papers, conserved in Göttingen, Riemann considers the derivatives of $\log\zeta(s)$ at the point $1/2$, giving explicit values for them.
Around 2010 we shared Riemann's value of the second derivative with some mathematicians. From that time I have been asked several times for references. So I decided to write this. Specially explaining the wonderful formulas
\[\frac{\zeta'(\frac12)}{\zeta(\frac12)}=\frac{\pi}{4}+\frac{\gamma}{2}+\frac{\log(8\pi)}{2},\quad
\frac{\zeta''(\frac12)}{\zeta(\frac12)}-\Bigl(\frac{\zeta'(\frac12)}{\zeta(\frac12)}\Bigr)^2=8-\frac{\pi^2}{4}-2G+2\sum_{n=1}^\infty\frac{1}{\alpha_n^2}\]
\end{abstract}

\maketitle

\section{Introduction}

At some moment I give notice of the two formulas 
\[\frac{\zeta'(\frac12)}{\zeta(\frac12)}=\frac{\pi}{4}+\frac{\gamma}{2}+\frac{\log(8\pi)}{2},\quad
\frac{\zeta''(\frac12)}{\zeta(\frac12)}-\Bigl(\frac{\zeta'(\frac12)}{\zeta(\frac12)}\Bigr)^2=8-\frac{\pi^2}{4}-2G+2\sum_{n=1}^\infty\frac{1}{\alpha_n^2}\]
to several mathematicians \cite{B}, \cite{G}. From time to time I received mails asking me for some reference. I could only say that they are due essentially to Riemann. In 2010 I get some not very good photocopies of some papers from Riemann. Later I get very good electronic copies of all Riemann papers on Number Theory in Göttingen. 

I learnt there that many formulas related to the zeta function, and usually attributed to later mathematicians were in fact known by Riemann himself. For example the Gram series
\[R(x)=\sum_{n=1}^\infty\frac{\mu(n)}{n}\Li(x^{1/n})=1+\sum_{k=1}^\infty\frac{(\log x)^k}{k!\,k\,\zeta(k+1)}.\]

It doesn't really matter who came up with a formula. When writing a draft—perhaps full of formulas that aren't entirely correct—one may realize that even if it isn't exactly the one we have before us, there is a relationship between a certain series and a certain constant or a certain integral. In a way, one has already seen the formula. Perhaps he will write it correctly later. Later, another mathematician writes it in a more elegant form. In this case, we can say that Voros \cite{V} has written similar equations more clearly. 

Does the fragment from Riemann in  Figure 1 convince you that the formulas in Theorem \ref{Riemann} are his? I think so, they were enough for me to write down the proof that Riemann had in mind. I hadn’t seen them anywhere else.  Later I found similar, not identical forms in Voros’s book \cite{V}.
In any case, it’s also possible that someone else has considered them and I’m simply unaware of it.

Among Riemann’s papers, a draft of a letter addressed to Weierstrass was found, reproduced in \cite{RN}. The letter was written after Riemann’s visit to Berlin, where he reported on his research into prime numbers. From the content of this letter, it can be inferred that it was Kronecker’s insistence that had prompted Riemann to speak about this research in Berlin. He informed Weierstrass that he was sending the paper to Berlin for publication in the Proceedings of the Berlin Academy.

\begin{figure}[H]
\begin{center}
\includegraphics[width=\hsize]{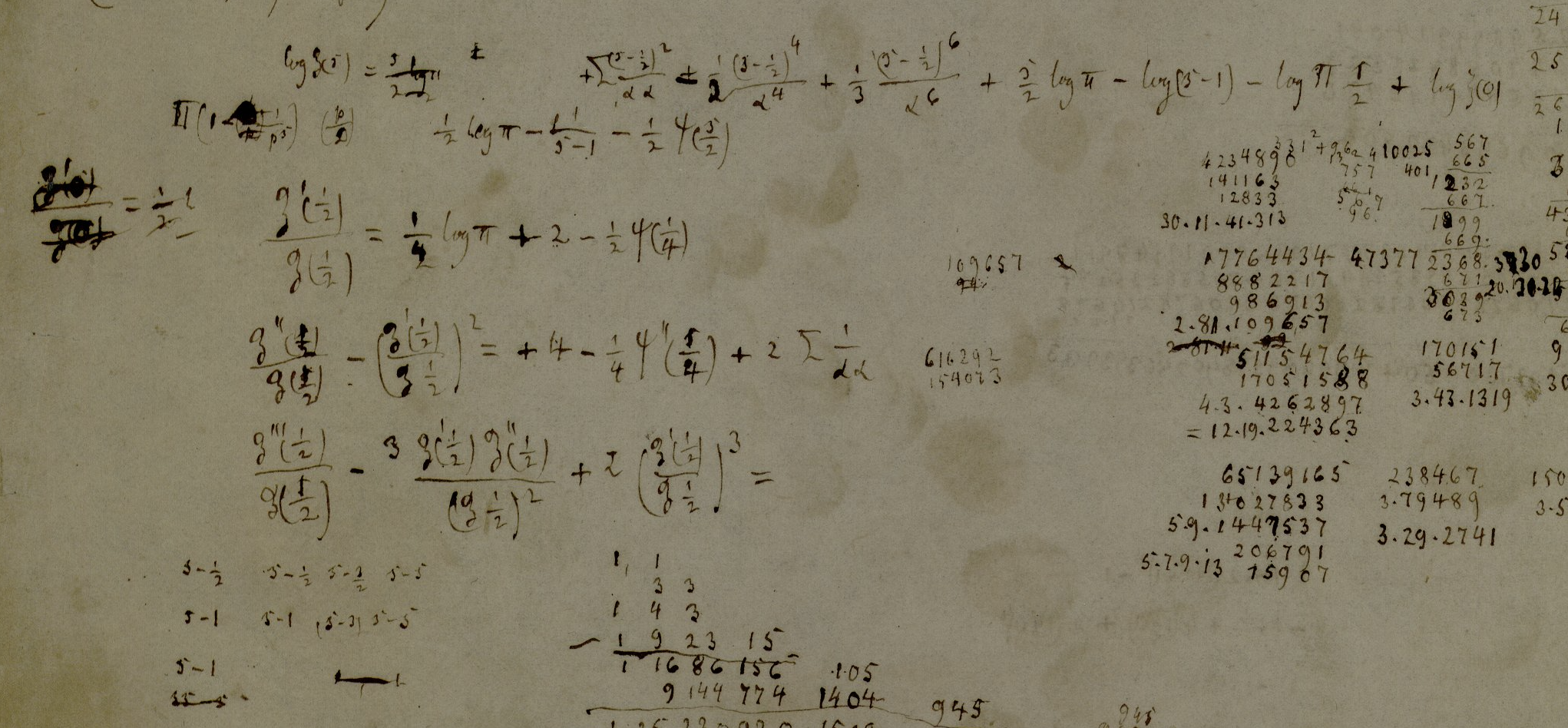}
\vspace{2mm}
\caption{
Fragment where Riemann writes his formulas\\about the logarithmic derivatives of $\zeta(s)$ at $s=\frac12$.}
\label{default}
\end{center}
\end{figure}

The Academy's proceedings published brief summaries of academic events, which explains why Riemann tried to keep his paper as short as possible. When reading the resulting article \cite{R}, one realizes that each sentence summarizes an entire paragraph.

He mentions in the letter that there are two statements whose proofs have not been fully completed. The first is that the number of zeros on the critical line $N_0(T)$ up to the point $T$ is asymptotically equivalent to the total number of zeros $N(T)$ (a statement that has not yet been proven).  The second is a step in the derivation of the equation for $\pi(x)$ that Landau \cite{L} later succeeded in proving.  

Riemann assumed that Weierstrass would have no trouble filling in the gaps in his exposition, except for the two mentioned. We know that this was not the case and that it took mathematicians decades to \emph{prove} Riemann’s claims. 

Riemann expressed doubts that the eight pages could be published in the Proceedings. Looking at it in hindsight, it may have been a pity that the paper was published. I say this because Riemann states that if he could not publish it, he would adapt it as a treatise for publication in Göttingen. And, certainly, Riemann could have written a book on the subject, given everything contained in the documents on number theory found in Göttingen.

\section{Riemann's derivation}

We usually define for $s$ and $t$ complex numbers
\begin{equation}
\xi(s)=\frac{s(s-1)}{2}\pi^{-s/2}\Gamma(s/2)\zeta(s),\qquad \Xi(t)=\xi(\tfrac12+it).
\end{equation} 
Then $\Xi(t)$ is an even entire function  with zeros $\alpha$ contained in the strip $|\Im(t)|<\frac12$. When $\alpha$ run through the zeros of $\Xi(t)$, the numbers $\frac12+i\alpha$ run through the complex zeros of $\zeta(s)$. The Riemann hypothesis is equivalent to say that the zeros $\alpha$ of $\Xi(t)$ are all real numbers. Under the Riemann hypothesis the numbers $\alpha$ are the ordinates of the zeros of zeta (and are denoted usually by the letter $\gamma$). We do not assume the Riemann hypothesis. 

For each zero $\alpha$ of $\Xi(t)$ the number $-\alpha$ is also a zero of $\Xi(t)$. We form a sequence $(\alpha_n)$ with the zeros of $\Xi(t)$ of positive real part,  ordered so that $0<\Re\alpha_1\le \Re\alpha_2\le \cdots$ repeating each zero according to its multiplicity. It is a fact that there is no zero of $\Xi(t)$ in the imaginary axis, so the numbers $\pm\,\alpha_n$ are all the zeros of the function $\Xi(t)$. 

All this is well known. Riemann derived the Hadamard product of $\Xi(t)$. His deduction has always been considered flawed, starting with Landau. It certainly is very concise, but in spite of Landau or the more detailed analysis of Edwards 
I have an unpublished paper in which I show how Riemann's suggestions can easily be developed into a rigorous proof (see what Edwards says in \cite{E}*{p~21}). 

In any case in most books I have consulted, instead of the product for $\Xi(t)$ the one for $\xi(s)$ is considered. So I stated Riemann's product formula.

\begin{theorem}
We have the expansion in product, valid for all complex numbers $t$
\begin{equation}\label{product}
\Xi(t)=\Xi(0)\prod_{n=1}^\infty\Bigl(1-\frac{t^2}{\alpha_n^2}\Bigr).
\end{equation}
\end{theorem}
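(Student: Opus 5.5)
We prove the product formula by applying Hadamard's factorization theorem to the entire function $F(z)=\Xi(\sqrt z)$, which is well defined because $\Xi$ is even. The plan is to show that $F$ has order at most $\tfrac12$. Then $F$ has genus $0$, so its Hadamard product has no exponential factor and no convergence factors. Substituting $z=t^2$ afterwards gives \eqref{product} directly.

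\textbf{Step 1: order of $\Xi$.} From the definition, $\xi(s)=\frac{s(s-1)}{2}\pi^{-s/2}\Gamma(s/2)\zeta(s)$. We use the standard facts that $\xi$ is entire and $\xi(s)=\xi(1-s)$. By symmetry it suffices to bound $\xi$ for $\Re s\ge\tfrac12$.
\begin{itemize}
\item In $\Re s\ge 2$ we have $|\zeta(s)|\le\zeta(2)$.
\item In $\tfrac12\le\Re s\le 2$ we use $\zeta(s)=\frac{s}{s-1}-s\int_1^\infty\{x\}x^{-s-1}\,dx$, which gives $|\zeta(s)|=O(|s|)$ away from $s=1$. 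The factor $s-1$ in $\xi$ removes the pole.
\item Stirling's formula gives $|\Gamma(s/2)|\le e^{C|s|\log|s|}$ for $\Re s\ge\tfrac12$ and $|s|$ large.
\end{itemize}
Together these give $|\xi(s)|\le \exp(C|s|\log|s|)$ for large $|s|$, hence $|\Xi(t)|\le\exp(C|t|\log|t|)$. Consequently $|F(z)|\le \exp(C|z|^{1/2}\log|z|)$, so $F$ has order at most $\tfrac12<1$.

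\textbf{Step 2: zeros and $F(0)\neq0$.} The zeros of $F$ are exactly the numbers $\alpha_n^2$, each listed with the multiplicity of $\alpha_n$ as a zero of $\Xi$. This uses the fact, stated above, that $\pm\alpha_n$ exhaust the zeros of $\Xi$ and that no zero lies on the imaginary axis. In particular $0$ is not a zero, since $\Xi(0)=\xi(\tfrac12)\ne0$. This holds because $\zeta(\tfrac12)<0$, which follows for instance from $(1-2^{1-s})\zeta(s)=\sum(-1)^{n-1}n^{-s}>0$ at $s=\tfrac12$.

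\textbf{Step 3: Hadamard.} An entire function of order $\rho<1$ satisfies $\sum|z_n|^{-1}<\infty$ over its zeros, and its Hadamard factorization takes the form
\[
F(z)=F(0)\prod_n\Bigl(1-\frac{z}{z_n}\Bigr).
\]
The general form is $F(z)=e^{P(z)}F(0)\prod_n(1-z/z_n)$ with $\deg P\le\rho$. Since $\rho<1$, the polynomial $P$ is constant, and it equals $0$ by evaluating at $z=0$. The product converges absolutely and uniformly on compacta because $\sum|\alpha_n|^{-2}<\infty$. Setting $z=t^2$ yields \eqref{product} for all complex $t$.

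\textbf{Main obstacle.} The only genuinely analytic input is Step 1, specifically the uniform polynomial bound for $\zeta$ in the critical strip combined with Stirling's formula. One could alternatively follow Riemann's own route. That route starts from the integral representation
\[
\Xi(t)=4\int_1^\infty \frac{d}{dx}\bigl(x^{3/2}\psi'(x)\bigr)x^{-1/4}\cos\bigl(\tfrac12 t\log x\bigr)\,dx,
\]
which gives the growth bound directly. It then compares $\log\Xi$ with the zero-counting asymptotic $N(T)\sim\frac{T}{2\pi}\log\frac{T}{2\pi}$. However, the Hadamard route above is the cleanest rigorous version.
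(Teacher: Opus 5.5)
Your proof is correct, but it takes a different route from the paper's. The paper gives no independent argument: it takes the standard Hadamard product for $\xi(s)$ from the literature and cites Edwards for converting it into Riemann's form.

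That conversion pairs each zero $\rho=\frac12+i\alpha$ with $1-\rho$. It uses the identity $(1-s/\rho)(1-s/(1-\rho))=(1-t^2/\alpha^2)/(1+\frac{1}{4\alpha^2})$ for $s=\frac12+it$, and then normalizes at $t=0$; this is the reverse of the computation the paper uses next for $\zeta(s)$. If one starts from the genus-one form $e^{A+Bs}\prod_\rho(1-s/\rho)e^{s/\rho}$, one must also show, via the functional equation, that $e^{Bs}$ and the convergence factors cancel. The pairing is indispensable there, since $\sum_\rho 1/|\rho|$ diverges.

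You instead exploit evenness at the level of growth rather than of zeros. Since $F(z)=\Xi(\sqrt z)$ has order at most $\frac12$, it has genus $0$. Hence no exponential factor ever appears, and absolute convergence of the product ($\sum|\alpha_n|^{-2}<\infty$) is automatic.

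The cost is that you must prove the bound $|\xi(s)|\le\exp(C|s|\log|s|)$ yourself. That is, however, the same estimate on which the textbook product for $\xi$ rests. So your proof is self-contained at essentially no extra expense, while the paper's is shorter only because it delegates both steps to references.
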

\begin{proof}
The usual product for $\xi(s)$ is transformed in Edwards \cite{E}*{p.~31} in the product of Riemann. 
\end{proof}

Once we have the product for $\Xi(t)$ we may get a nice expression for $\zeta(s)$. This is trivial, but again is an expression not usually given explicitly.

\begin{theorem}
For all complex numbers $s$ (except the integers $\le 1$ we have 
\begin{equation}\label{zetaexp}
\zeta(s)=\frac{\pi^{s/2}}{2 (s-1)\Gamma(1+s/2)}\prod_{\Im\rho>0}\Bigl\{\Bigl(1-\frac{s}{\rho}\Bigr)\Bigl(1-\frac{s}{1-\rho}\Bigr)\Bigr\},
\end{equation}
where $\rho$ runs through the complex zeros of $\zeta(s)$ with imaginary part greater than $0$.
\end{theorem}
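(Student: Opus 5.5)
Substitute $t=-i(s-\frac12)$ into the product formula \eqref{product}, so that $\Xi(t)=\xi(s)$, and then solve the definition of $\xi$ for $\zeta(s)$. Each zero $\alpha_n$ (with $\Re\alpha_n>0$) corresponds to the zero $\rho_n=\frac12+i\alpha_n$ of $\zeta$, and $\Re\alpha_n>0$ means exactly $\Im\rho_n>0$. Its partner $-\alpha_n$ gives $1-\rho_n$. So the zeros with $\Im\rho>0$ are the $\rho_n$, each listed once with multiplicity.

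First I rewrite a single factor. With $t=-i(s-\frac12)$ we have $t^2=-(s-\frac12)^2$ and $\alpha_n=-i(\rho_n-\frac12)$, hence $\alpha_n^2=-(\rho_n-\frac12)^2$. Therefore
\[1-\frac{t^2}{\alpha_n^2}=\frac{(\rho_n-\frac12)^2-(s-\frac12)^2}{(\rho_n-\frac12)^2}=\frac{(\rho_n-s)(\rho_n+s-1)}{(\rho_n-\frac12)^2}.\]
Separately,
\[\Bigl(1-\frac{s}{\rho_n}\Bigr)\Bigl(1-\frac{s}{1-\rho_n}\Bigr)=\frac{(\rho_n-s)(1-\rho_n-s)}{\rho_n(1-\rho_n)}.\]
Since $(\rho_n-\frac12)^2=-\rho_n(1-\rho_n)+\frac14$, the two expressions do not match factor by factor. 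I therefore compare instead the normalized factors $\bigl(1-\frac{s}{\rho_n}\bigr)\bigl(1-\frac{s}{1-\rho_n}\bigr)$ evaluated at $s$ and at $s=\frac12$. The ratio of $1-t^2/\alpha_n^2$ to $\bigl(1-\frac{s}{\rho_n}\bigr)\bigl(1-\frac{s}{1-\rho_n}\bigr)$ is the constant $\rho_n(1-\rho_n)/(\rho_n-\frac12)^2$, which is independent of $s$.

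The real issue is convergence. Both $\prod_n(1-t^2/\alpha_n^2)$ and $\prod_{\Im\rho>0}(\dots)$ converge absolutely, since $\sum|\alpha_n|^{-2}<\infty$ and $\rho_n(1-\rho_n)=\frac14+\alpha_n^2$. The constants $c_n=(\frac14+\alpha_n^2)/\alpha_n^2=1+\frac{1}{4\alpha_n^2}$ then have a convergent product $C=\prod c_n$. This is the main point to check; it follows from $\sum|\alpha_n|^{-2}<\infty$, a consequence of \eqref{product} being a genus-zero product in $t^2$. Hence
\[\xi(s)=\Xi(0)\,C^{-1}\prod_{\Im\rho>0}\Bigl(1-\frac{s}{\rho}\Bigr)\Bigl(1-\frac{s}{1-\rho}\Bigr)=K\prod_{\Im\rho>0}\Bigl(1-\frac{s}{\rho}\Bigr)\Bigl(1-\frac{s}{1-\rho}\Bigr).\]

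I fix $K$ by putting $s=0$. Every factor equals $1$ there, so $K=\xi(0)$. From the definition, $\xi(0)=\lim_{s\to0}\frac{s(s-1)}{2}\pi^{-s/2}\Gamma(s/2)\zeta(s)$. Using $s\Gamma(s/2)\to2$ and $\zeta(0)=-\frac12$, this gives $\xi(0)=\frac12$.

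Finally I solve for $\zeta$. From $\frac{s}{2}\Gamma(\frac s2)=\Gamma(1+\frac s2)$,
\[\zeta(s)=\frac{2\xi(s)\pi^{s/2}}{s(s-1)\Gamma(s/2)}=\frac{\xi(s)\pi^{s/2}}{(s-1)\Gamma(1+s/2)}.\]
Substituting $\xi(s)=\frac12\prod(\dots)$ yields \eqref{zetaexp}. The excluded points are where $(s-1)\Gamma(1+s/2)$ has a zero or a pole; by the identity principle the formula extends everywhere else. No rearrangement is needed, because the products are taken in the order of increasing $\Re\alpha_n$, i.e. increasing $\Im\rho$.
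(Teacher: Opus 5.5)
Your argument is correct and is essentially the paper's. The paper also writes $1+\frac{(s-1/2)^2}{\alpha_n^2}=\bigl(1+\frac{1}{4\alpha_n^2}\bigr)\bigl(1-\frac{s}{\rho_n}\bigr)\bigl(1-\frac{s}{1-\rho_n}\bigr)$ and identifies the constant as $\Xi(0)\prod_n\bigl(1+\frac{1}{4\alpha_n^2}\bigr)=\Xi(i/2)=\xi(0)=\frac12$, which is exactly your normalization at $s=0$; you additionally supply the convergence bookkeeping the paper leaves implicit.

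There are two sign slips, both harmless. The ratio is $-\rho_n(1-\rho_n)/(\rho_n-\frac12)^2=1+\frac{1}{4\alpha_n^2}=c_n$, so the constant is $\Xi(0)\,C$, not $\Xi(0)\,C^{-1}$. Neither affects the result, because you determine $K$ directly at $s=0$.
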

\begin{proof}
Let $s=\frac12+it$ where $s$ and $t$ are complex numbers, we will have $\Xi(t)=\xi(s)$. 
Then 
\[\Xi(t)=\Xi(0)\prod_{n=1}^\infty \Bigl(1-\frac{t^2}{\alpha_n^2}\Bigr)=\xi(s)=\frac{s(s-1)}{2}\pi^{-s/2}\Gamma(s/2)\zeta(s).\]
If $s$ is not an integer, we have 
\begin{equation}\label{partial}
\zeta(s)=\Xi(0)\frac{\pi^{s/2}}{(s-1)\Gamma(1+s/2)}\prod_{n=1}^\infty\Bigl(1+\frac{(s-1/2)^2}{\alpha_n^2}\Bigr).
\end{equation}
The quadratic function can be factored 
\[\Bigl(1+\frac{(s-1/2)^2}{\alpha_n^2}\Bigr)=\Bigl(1+\frac{1}{4\alpha_n^2}\Bigr)\Bigl(1-\frac{s}{\frac12+i\alpha_n}\Bigr)\Bigl(1-\frac{s}{\frac12-i\alpha_n}\Bigr).\]
Also note that 
\[\Xi(0)\prod_{n=1}^\infty\Bigl(1+\frac{1}{4\alpha_n^2}\Bigr)=\Xi(i/2)=\xi(0)=-\zeta(0)=\frac12.\]
This yields \eqref{zetaexp} after noticing that $\frac12+i\alpha_n=\rho_n$ run through the complex zeros of zeta with imaginary part $>0$. 
\end{proof}

\begin{theorem}\label{Riemann}
We have the particular values
\begin{equation}
\frac{\zeta'(\frac12)}{\zeta(\frac12)}=\frac{\pi}{4}+\frac{\gamma}{2}+\frac{\log(8\pi)}{2},\quad
\frac{\zeta''(\frac12)}{\zeta(\frac12)}-\Bigl(\frac{\zeta'(\frac12)}{\zeta(\frac12)}\Bigr)^2=8-\frac{\pi^2}{4}-2G+2\sum_{n=1}^\infty\frac{1}{\alpha_n^2}.
\end{equation}
where $\gamma=0.577\dots  $ is Euler constant and $G=0.915\dots$ is Catalan's constant. 
\end{theorem}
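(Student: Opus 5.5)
The plan is to take the logarithmic derivative of the representation \eqref{partial} obtained in the proof of the previous theorem. Near $s=\frac12$ we have $s-1<0$, so I would first rewrite \eqref{partial} as
\[\zeta(s)=-\Xi(0)\frac{\pi^{s/2}}{(1-s)\Gamma(1+s/2)}\prod_{n=1}^\infty\Bigl(1+\frac{(s-\frac12)^2}{\alpha_n^2}\Bigr).\]
In a small disc around $s=\frac12$ none of the factors vanishes: $\zeta(\frac12)\ne0$ and no $\alpha_n$ lies on the imaginary axis. Hence a branch of $\log(-\zeta(s))$ can be taken there, and
\[\log(-\zeta(s))=\log\Xi(0)+\frac{s}{2}\log\pi-\log(1-s)-\log\Gamma(1+s/2)+\sum_{n=1}^\infty\log\Bigl(1+\frac{(s-\frac12)^2}{\alpha_n^2}\Bigr).\]
Since $\sum|\alpha_n|^{-2}<\infty$ (the product \eqref{product} converges and $\Xi$ has order one), the series of logarithms converges uniformly on a small disc around $\frac12$. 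It may therefore be differentiated termwise, by Weierstrass' theorem on uniformly convergent series of holomorphic functions. Justifying this step is the only point needing care, and the convergence of $\sum\alpha_n^{-2}$ is what makes it work.

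The $n$-th term has derivative $2(s-\frac12)/(\alpha_n^2+(s-\frac12)^2)$, which vanishes at $s=\frac12$. Its second derivative at $s=\frac12$ equals $2/\alpha_n^2$. Thus the zeros contribute nothing to the first logarithmic derivative and exactly $2\sum\alpha_n^{-2}$ to the second. The remaining terms give
\[\frac{\zeta'(\frac12)}{\zeta(\frac12)}=\frac{\log\pi}{2}+2-\frac12\psi(\tfrac54),\qquad \frac{d^2}{ds^2}\log\zeta(s)\Big|_{s=\frac12}=4-\frac14\psi'(\tfrac54)+2\sum_{n=1}^\infty\frac{1}{\alpha_n^2}.\]
Here the second derivative of $\log\zeta$ is exactly $\zeta''/\zeta-(\zeta'/\zeta)^2$, which is the left-hand side of the second formula.

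What remains is to evaluate the digamma and trigamma values, using the recurrences and the classical values at $\frac14$. From $\psi(\frac54)=\psi(\frac14)+4$ and Gauss' value $\psi(\frac14)=-\gamma-\frac{\pi}{2}-3\log2$ we get
\[-\tfrac12\psi(\tfrac54)=\frac{\gamma}{2}+\frac{\pi}{4}+\frac32\log 2-2.\]
Adding $\frac12\log\pi+2$ then gives $\frac{\pi}{4}+\frac{\gamma}{2}+\frac12\log(8\pi)$.

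For the second formula, $\psi'(\frac54)=\psi'(\frac14)-16$. The value $\psi'(\frac14)=\pi^2+8G$ follows from $\psi'(\frac14)=16\sum_{k\ge0}(4k+1)^{-2}$ together with
\[\sum_{k\ge0}(4k+1)^{-2}+\sum_{k\ge0}(4k+3)^{-2}=\frac{\pi^2}{8},\qquad \sum_{k\ge0}(4k+1)^{-2}-\sum_{k\ge0}(4k+3)^{-2}=G.\]
Hence $-\frac14\psi'(\frac54)=4-\frac{\pi^2}{4}-2G$. Adding the $4$ coming from $-\log(1-s)$ yields $8-\frac{\pi^2}{4}-2G+2\sum_n\alpha_n^{-2}$, as stated.
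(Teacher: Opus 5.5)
Your proposal is correct and follows essentially the paper's route. Both take the logarithmic derivative of \eqref{partial}, with the zero factors contributing $0$ to the first derivative and $2\sum\alpha_n^{-2}$ to the second, and then insert the classical values $\psi(\frac14)=-\frac{\pi}{2}-\gamma-3\log 2$ and $\psi'(\frac14)=\pi^2+8G$. The differences are cosmetic: you keep $\Gamma(1+s/2)$ and use the recurrences for $\psi(\frac54)$ and $\psi'(\frac54)$ where the paper splits off the factor $s/2$, and you justify termwise differentiation by uniform convergence on a small disc where the paper expands in a double power series in $s-\frac12$.
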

\begin{proof}
For $|s-\frac12|<|\alpha_1|$ we have $|s-\frac12|<|\alpha_n|$ for all $n$. It follows that 
\[\log\prod_{n=1}^\infty\Bigl(1+\frac{(s-1/2)^2}{\alpha_n^2}\Bigr)=\sum_{n=1}^\infty
\log\Bigl(1+\frac{(s-1/2)^2}{\alpha_n^2}\Bigr)=-\sum_{n=1}^\infty\sum_{k=1}^\infty 
(-1)^k\frac{(s-1/2)^{2k}}{k\alpha_n^{2k}}\]
where the double series, which converges absolutely in the disc $|s-\frac12|<|\alpha_1|$ represents a holomorphic determination of the logarithm in the disc.

Therefore,
by logarithmic differentiation of \eqref{partial}, putting $\Gamma(1+s/2)=(s/2)\Gamma(s/2)$ 
\[\frac{\zeta'(s)}{\zeta(s)}=\frac{1}{2}\log\pi-\frac{1}{s-1}-\frac1s-\frac{\Gamma'(s/2)}{2\Gamma(s/2)}-2\sum_{n=1}^\infty \sum_{k=1}^\infty (-1)^k\frac{(s-\frac12)^{2k-1}}{\alpha_n^{2k}}.\]
Putting here $s=\frac12$, the sum vanishes, and we get
\[\frac{\zeta'(\frac12)}{\zeta(\frac12)}=\frac{\log \pi}{2}-\frac{\Gamma'(1/4)}{2\Gamma(1/4)}=\frac{\log \pi}{2}-\frac12\psi(1/4),\]
Where we used the notation $\psi(s)=\Gamma'(s)/\Gamma(s)$. 

It is well known that 
\begin{equation}\label{part}
\psi(1/4)=-\frac{\pi}{2}-\gamma-3\log 2,\qquad \psi'(1/4)=\pi^2+8G,
\end{equation}
where $\gamma=0.577\dots  $ is Euler constant and $G$ is Catalan's constant. 

Substituting we get 
\[\frac{\zeta'(\frac12)}{\zeta(\frac12)}=\frac{\pi}{4}+\frac{\gamma}{2}+\frac{\log(8\pi)}{2}.\]
Differentiating again the equation (the series converge absolutely)
\[\frac{\zeta'(s)}{\zeta(s)}=\frac{1}{2}\log\pi-\frac{1}{s-1}-\frac1s-\frac12\psi(s/2)-2\sum_{n=1}^\infty \sum_{k=1}^\infty (-1)^k\frac{(s-\frac12)^{2k-1}}{\alpha_n^{2k}}\] yields
\[\frac{d}{ds}\Bigl(\frac{\zeta'(s)}{\zeta(s)}\Bigr)=\frac{1}{(s-1)^2}+\frac{1}{s^2}-\frac14\psi'(s/2)-2\sum_{n=1}^\infty \sum_{k=1}^\infty (-1)^k(2k-1)\frac{(s-\frac12)^{2k-2}}{\alpha_n^{2k}}\]
Here we substitute $s=\frac12$
\[\Bigl.\frac{d}{ds}\Bigl(\frac{\zeta'(s)}{\zeta(s)}\Bigr)\Bigr|_{s=\frac12}=8-\frac14\psi'(1/4)+2\sum_{n=1}^\infty\frac{1}{\alpha_n^2}.\]
That is 
\[\frac{\zeta''(\frac12)}{\zeta(\frac12)}-\Bigl(\frac{\zeta'(\frac12)}{\zeta(\frac12)}\Bigr)^2=8-\frac{\pi^2}{4}-2G+2\sum_{n=1}^\infty\frac{1}{\alpha_n^2}.\qedhere\]
\end{proof}

\begin{remark}
Riemann write the next derivative, we see that the process can continue. That there is a difference between the even and odd derivatives.  But forget about continue this to see for that the approach of Voros. 
\end{remark}

\begin{remark}
The first equation in \eqref{part} is well known, it is a particular case of an expression for $\psi(p/q)$ due to Gauss. 

The second equation in \eqref{part} is also well known. For example it appears in \cite{F}*{p.55}. It is easy to derive from Mittag-Leffler expansion of $\psi'(z)$, the definition of $G=\sum (-1)^n(2n+1)^2$, 
and the series of Euler  $\sum(2n+1)^{-2}$.  
\end{remark}

\section{Voros equations}

We must cite Voros \cite{V} for this material. But what I write is not directly there. 

Denote by $\chi$ the non trivial Dirichlet character modulo 4, then 
\[L(s,\chi)=\sum_{k=0}^\infty \frac{(-1)^k}{(2k+1)^s},\quad \Bigl(1-\frac{1}{2^s}\Bigr)\zeta(s)=\sum_{k=0}^\infty\frac{1}{(2k+1)^s}.\]
We have 
\[L(2n+1,\chi)=\frac{(\pi/2)^{2n+1}}{2(2n)!}|E_{2n}|, \quad \zeta(2n)=\frac{(2\pi)^{2n}}{2(2n)!}|B_{2n}|,\qquad n\ge1\]
where $E_n$ and $B_n$ are Euler and Bernoulli numbers.

\begin{theorem}
For $n\ge 3$ odd
\begin{equation}
\Bigl.\frac{1}{(n-1)! 2^{n-1}}\frac{d^{n-1}}{ds^{n-1}}\frac{\zeta'(s)}{\zeta(s)}\Bigr|_{s=\frac12}=\Bigl(1-\frac{1}{2^n}\Bigr)\zeta(n)+L(n,\chi).
\end{equation}
For $n\ge 2$ even, 
\begin{equation}
\Bigl.\frac{1}{(n-1)! 2^{n-1}}\frac{d^{n-1}}{ds^{n-1}}\frac{\zeta'(s)}{\zeta(s)}\Bigr|_{s=\frac12}=4-\frac{(-1)^{n/2}}{2^{n}}\sum_{\Re\alpha>0}\frac{4}{\alpha^n}-L(n,\chi)-\Bigl(1-\frac{1}{2^n}\Bigr)\zeta(n).
\end{equation}
\end{theorem}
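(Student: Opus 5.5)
The plan is to differentiate, $n-1$ times, the formula for $\zeta'(s)/\zeta(s)$ obtained in the proof of Theorem~\ref{Riemann}:
\[\frac{\zeta'(s)}{\zeta(s)}=\frac{1}{2}\log\pi-\frac{1}{s-1}-\frac1s-\frac12\psi(s/2)-2\sum_{n=1}^\infty \sum_{k=1}^\infty (-1)^k\frac{(s-\frac12)^{2k-1}}{\alpha_n^{2k}}.\]
Then evaluate at $s=\frac12$ and divide by $(n-1)!\,2^{n-1}$. I will handle each of the four pieces separately. Since the double series converges absolutely and locally uniformly in the disc $|s-\frac12|<|\alpha_1|$, it is a power series in $s-\frac12$ there. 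So termwise differentiation is justified by Weierstrass' theorem. The constant $\frac12\log\pi$ plays no role because $n\ge2$.

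\emph{Rational terms.} The $(n-1)$-th derivative of $-(s-1)^{-1}$ is $-(-1)^{n-1}(n-1)!\,(s-1)^{-n}$. At $s=\frac12$ we have $(s-1)^{-n}=(-2)^n$, so this equals $(n-1)!\,2^n$. After normalization it contributes $2$. In the same way $-1/s$ contributes $-(-1)^{n-1}(n-1)!\,2^n$, which normalizes to $2(-1)^n$. Together these give $4$ for $n$ even and $0$ for $n$ odd. This accounts for the constant $4$ in the even case.

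\emph{Digamma term.} I will use $\psi^{(m)}(z)=(-1)^{m+1}m!\sum_{k\ge0}(k+z)^{-m-1}$. This gives
\[\psi^{(n-1)}(1/4)=(-1)^n(n-1)!\,4^n\sum_{k\ge0}(4k+1)^{-n}.\]
The $(n-1)$-th derivative of $-\frac12\psi(s/2)$ is $-2^{-n}\psi^{(n-1)}(s/2)$. At $s=\frac12$ and after normalization, it contributes $-2(-1)^n\sum_{k\ge0}(4k+1)^{-n}$. Now I will rewrite this using
\[2\sum_{k\ge0}(4k+1)^{-n}=\sum_{m\text{ odd}}m^{-n}+\sum_{m\text{ odd}}\chi(m)m^{-n}=\Bigl(1-\frac1{2^n}\Bigr)\zeta(n)+L(n,\chi).\]
This is valid for $n\ge2$, where both series converge absolutely. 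Hence the digamma term gives $+\bigl[(1-2^{-n})\zeta(n)+L(n,\chi)\bigr]$ for $n$ odd and $-\bigl[\cdots\bigr]$ for $n$ even, as required.

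\emph{Zero sum.} Only powers $(s-\frac12)^{2k-1}$ with $2k-1=n-1$ survive at $s=\frac12$. This is impossible for $n$ odd, so the zero sum vanishes exactly in the odd case. For $n$ even, take $k=n/2$: the sum contributes $-2(-1)^{n/2}(n-1)!\sum_{j}\alpha_j^{-n}$. Normalized, this is $-\frac{(-1)^{n/2}}{2^n}\sum_{\Re\alpha>0}\frac{4}{\alpha^n}$. Adding the three contributions gives both formulas. As a consistency check, the case $n=2$ reproduces Theorem~\ref{Riemann}: the formula gives $4\cdot\bigl(8-\frac{\pi^2}{4}-2G+2\sum\alpha_j^{-2}\bigr)^{}/4$ after the factor $2^{n-1}=2$ is undone.

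The only genuinely delicate step is the justification of termwise differentiation of the double series. I will handle it by noting that it is a uniformly absolutely convergent power series on compact subdiscs of $|s-\frac12|<|\alpha_1|$. The remaining work is careful sign bookkeeping with $(-1)^n$ and $(-1)^{n/2}$, which is where I expect errors are most likely.
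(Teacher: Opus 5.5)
Your proof is correct and is essentially the paper's argument. The paper differentiates the Hadamard expansion $\frac{\zeta'(s)}{\zeta(s)}=-\frac{1}{s-1}+\sum_\rho\bigl(\frac{1}{s-\rho}-\frac{1}{\rho}\bigr)+\sum_{k\ge1}\bigl(\frac{1}{s+2k}-\frac{1}{2k}\bigr)+B$ termwise, and this is the same identity as your starting formula: $-\frac1s-\frac12\psi(\frac s2)=\frac{\gamma}{2}+\sum_{k\ge1}\bigl(\frac{1}{s+2k}-\frac{1}{2k}\bigr)$, and your double series is the Taylor expansion at $s=\frac12$ of the $\rho$-sum with $\alpha$ and $-\alpha$ paired.
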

\begin{proof}
We start with 
\[\frac{\zeta'(s)}{\zeta(s)}=-\frac{1}{s-1}+\sum_\rho\Bigl(\frac{1}{s-\rho}-\frac{1}{\rho}\Bigr)
+\sum_{k=1}^\infty \Bigl(\frac{1}{s+2k}-\frac{1}{2k}\Bigr)+B\]
Differentiating this 
\[\frac{d^{n-1}}{ds^{n-1}}\frac{\zeta'(s)}{\zeta(s)}=\frac{(-1)^{n} (n-1)!}{(s-1)^{n}}-\sum_{\rho}\frac{(-1)^n (n-1)!}{(s-\rho)^{n}}-\sum_{k=1}^\infty \frac{(-1)^n (n-1)!}{(s+2k)^{n}}.\]
Or also 
\[\frac{d^{n-1}}{ds^{n-1}}\frac{\zeta'(s)}{\zeta(s)}=\frac{ (n-1)!}{(1-s)^{n}}-\sum_{\rho}\frac{(n-1)!}{(\rho-s)^{n}}-\sum_{k=1}^\infty \frac{(-1)^n (n-1)!}{(s+2k)^{n}}.\]
Now take $s=\frac12$
\[\Bigl.\frac{d^{n-1}}{ds^{n-1}}\frac{\zeta'(s)}{\zeta(s)}\Bigr|_{s=\frac12}=2^n (n-1)!-\sum_{\alpha}\frac{(n-1)!}{(i\alpha)^{n}}-\sum_{k=1}^\infty \frac{(-1)^n (n-1)!}{(\frac12+2k)^{n}}.\]
We separate the cases in which $n$ is odd or even. When $n$ is odd, the sum in $\alpha$ vanishes since the term with $(i\alpha)^n$ is opposite to that with $(-i\alpha)^n$, then we have, for $n$ odd
\begin{align*}
\Bigl.\frac{d^{n-1}}{ds^{n-1}}\frac{\zeta'(s)}{\zeta(s)}\Bigr|_{s=\frac12}&=2^n (n-1)!+\sum_{k=1}^\infty \frac{(n-1)!}{(\frac12+2k)^{n}}=2^n (n-1)!\sum_{k=0}^\infty \frac{1}{(4k+1)^n}\\
&=2^{n-1} (n-1)!\Bigl\{L(n,\chi)+\Bigl(1-\frac{1}{2^n}\Bigr)\zeta(n)\Bigr\}.
\end{align*}
When $n$ is even we obtain 
\begin{align*}
\Bigl.\frac{d^{n-1}}{ds^{n-1}}\frac{\zeta'(s)}{\zeta(s)}\Bigr|_{s=\frac12}&=2^n (n-1)!-(-1)^{n/2}\sum_{\Re\alpha>0}\frac{2 (n-1)!}{\alpha^{n}}-\sum_{k=1}^\infty \frac{ (n-1)!}{(\frac12+2k)^{n}}\\
&=2^n(n-1)!\Bigl(2-\frac{(-1)^{n/2}}{2^{n}}\sum_{\Re\alpha>0}\frac{2}{\alpha^n}-\sum_{k=0}^\infty\frac{1}{(4k+1)^n}\Bigr)\\
&=2^{n-1}(n-1)!\Bigl(4-\frac{(-1)^{n/2}}{2^{n}}\sum_{\Re\alpha>0}\frac{4}{\alpha^n}-L(n,\chi)-\Bigl(1-\frac{1}{2^n}\Bigr)\zeta(n)\Bigr)\qedhere
\end{align*}
\end{proof}

We may write some of the equations we get:
\[
\begin{aligned}
&\frac{\zeta'(\frac12)}{\zeta(\frac12)}=\frac{\pi}{4}+\frac{\gamma}{2}+\frac{\log(8\pi)}{2},\\
&\frac{\zeta''(\frac12)}{\zeta(\frac12)}-\Bigl(\frac{\zeta'(\frac12)}{\zeta(\frac12)}\Bigr)^2=8-\frac{\pi^2}{4}-2G+2\sum_{n=1}^\infty\frac{1}{\alpha_n^2},\\
&\frac{\zeta'''(\frac12)}{\zeta(\frac12)}-3\frac{\zeta''(\frac12)\zeta'(\frac12)}
{\zeta(\frac12)^2}+2\frac{\zeta'(\frac12)^3}
{\zeta(\frac12)^3}=\frac{\pi^3}{4}+7\zeta(3),\\
&\frac{\zeta^{(4)}(\frac12)}{\zeta(\frac12)}
-4\frac{\zeta'(\frac12)\zeta'''(\frac12)}{\zeta(\frac12)^2}
-3\frac{\zeta''(\frac12)^2}{\zeta(\frac12)^2}
+12\frac{\zeta'(\frac12)^2\zeta''(\frac12)}{\zeta(\frac12)^3}
-6\frac{\zeta'(\frac12)^4}{\zeta(\frac12)^4}\\&\qquad=192-\frac{\pi^4}{2}-48L(4,\chi)-12\sum_{n=1}^\infty\frac{1}{\alpha_n^4},\\
&\frac{\zeta^{(5)}(\frac12)}{\zeta(\frac12)}
-5\frac{\zeta'(\frac12)\zeta^{(4)}(\frac12)}{\zeta(\frac12)^2}
-10\frac{\zeta''(\frac12)\zeta'''(\frac12)}{\zeta(\frac12)^2}
+20\frac{\zeta'(\frac12)^2\zeta'''(\frac12)}{\zeta(\frac12)^3}
+30\frac{\zeta'(\frac12)\zeta''(\frac12)^2}{\zeta(\frac12)^3}\\
&\quad-60\frac{\zeta'(\frac12)^3\zeta''(\frac12)}{\zeta(\frac12)^4}
+24\frac{\zeta'(\frac12)^5}{\zeta(\frac12)^5}
=12\Bigl(31\zeta(5)+\frac{5\pi^5}{48}\Bigr).
\end{aligned}
\]

We have checked numerically these formulas to 40 digits, using mpmath \cite{mpmath} and specially the implementation of the secondary zeta function \cite{A}. 



\end{document}